\def\boxit#1{\vbox{\hrule height1pt\hbox{\vrule width1pt\kern3pt
  \vbox{\kern3pt#1\kern3pt}\kern3pt\vrule width1pt}\hrule height1pt}}
\def\trank{\text{rank}}
\def\BC{\mathbb C}
\def\BP{\mathbb P}
\def\pp#1{\mathbb P^{#1}}
\def\pp#1{{\mathbb P}^{#1}}
\def\tdim{\rm dim}
\def\hd{,...,}
\def\upperp{{}^\perp}
\def\be{\begin{equation}}
\def\ene{\end{equation}}
\def\aaa{{\bold a}}\def\bbb{{\bold b}}
\def\cS{{\mathcal S}}
\def\11{\mathbf 1}
\def\FS{{\mathfrak S}}
\def\a{\alpha}
\def\s{\sigma}
\def\ot{{\mathord{\,\otimes }\,}}
\def\op{{\mathord{\,\oplus }\,}}
\def\otc{{\mathord{\otimes\cdots\otimes}\;}}
\def\ra{{\mathord{\;\rightarrow\;}}}
\def\tdim{{\rm dim}\;}
\newtheorem{theorem}{Theorem}
\newtheorem{proposition}[theorem]{Proposition}
\newtheorem{lemma}[theorem]{Lemma}
\theoremstyle{definition}
\newtheorem{definition}[theorem]{Definition}
\theoremstyle{remark}
\newtheorem{remark}[theorem]{Remark}
\def\t{\tau}
 \def\aaa{{\bold a}}
\def\bbb{{\bold b}}
\def\ccc{{\bold c}}
\def\tmin{\operatorname{min}}
\begin{document}
\title{Kruskal's theorem}
\author{J.M. Landsberg}
\begin{abstract}   This is just a short proof of Kruskal's theorem regarding uniqueness of
expressions for tensors, phrased
in geometric language.
\end{abstract}
 \thanks{Supported by NSF grant DMS-DMS-0805782}
\email{jml@math.tamu.edu}
\maketitle

Let $A,B,C$ be complex vector spaces of dimensions $\aaa,\bbb,\ccc$.
Consider a tensor  $T\in A\ot B\ot C$ and say we have an expression
\be\label{kruskala}T=u_1\ot v_1\ot w_1+\cdots + u_r\ot v_r\ot w_r
\ene 
where $u_j\in A, v_j\in B, w_j\in C$,
and we want to know if the expression
is     unique up to re-ordering the factors (call this {\it essentially unique}). The {\it rank} of $T$ is by definition
the smallest such $r$ such that $T$ admits an expression of the form \eqref{kruskala}. For the tensor product of two vector spaces, an expression
as a sum of $r$ elements is never unique unless $r=1$. Thus an obvious necessary condition
for uniqueness is that we cannot be reduced to a two factor situation. For example,
an expression of the form
$$
T=a_1\ot b_1\ot c_1 + a_1\ot b_2\ot c_2 + a_3\ot b_3\ot c_3+\hdots +a_r\ot b_r\ot c_r
$$
where each of the sets $\{ a_i\},\{ b_j\}, \{ c_k\}$ are linearly independent
is not unique because of the first two terms. In other words if we consider for \eqref{kruskala}
the sets
$\cS_A=\{ [u_i]\}\subset \BP A$, $\cS_B=\{ [v_i]\}\subset\BP B$, $\cS_C=\{ [w_i]\}\subset \BP C$
each of the sets must consist of $r$ distinct points. 

\smallskip
 
We recall the classical fact:

\begin{proposition}\label{zerowash} Let $n>2$. 
Let $T\in A_1\otc A_n $ have rank $r$. Say
$T\in A_1'\otc A_n'$, where $A'_j\subseteq A_j$,   with at least one inclusion proper.
Then any expression   $T=\sum_{i=1}^{\rho}u^1_i\otc u^n_i$  with
some $u^s_j\not\in A_s'$ has $\rho>r$.
\end{proposition}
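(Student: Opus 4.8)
The plan is to prove the contrapositive: if $\rho\le r$ then $u^s_i\in A'_s$ for all $i$ and $s$, which is incompatible with the stated hypothesis that some $u^s_j\notin A'_s$. First I would observe that since $r=\trank(T)$ is by definition the \emph{minimal} length of an expression of the form \eqref{kruskala}, the assumption $\rho\le r$ forces $\rho=r$; so it suffices to show that in an arbitrary length-$r$ expression $T=\sum_{i=1}^{r}u^1_i\otc u^n_i$ each slot-$s$ vector lies in $A'_s$. Permuting the tensor factors, I may assume $s=1$, and then I only need the containment $T\in A'_1\ot A_2\otc A_n$, which follows from $T\in A'_1\otc A'_n$ because each $A'_j\subseteq A_j$.

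Now suppose, for contradiction, that after reindexing the summands $u^1_1\notin A'_1$. Since $u^1_1\notin A'_1$, there is a linear form $\phi\in A_1^*$ with $\phi|_{A'_1}=0$ and $\phi(u^1_1)=1$. Contract $T$ with $\phi$ in the first factor. On the one hand the result is $0$, because $T\in A'_1\ot A_2\otc A_n$ and $\phi$ kills $A'_1$. On the other hand, setting $S_i:=u^2_i\otc u^n_i\in A_2\otc A_n$, the result equals $\sum_{i=1}^{r}\phi(u^1_i)\,S_i$. Hence $S_1=-\sum_{i=2}^{r}\phi(u^1_i)\,S_i$, and substituting this into $T=\sum_{i=1}^{r}u^1_i\ot S_i$ gives
\be T=\sum_{i=2}^{r}\bigl(u^1_i-\phi(u^1_i)\,u^1_1\bigr)\ot S_i. \ene
Each summand on the right is a vector of $A_1$ tensored with $S_i=u^2_i\otc u^n_i$, hence a rank-$\le 1$ element of $A_1\otc A_n$; so $T$ is a sum of $r-1$ such tensors, forcing $\trank(T)\le r-1$ and contradicting $\trank(T)=r$. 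Therefore every $u^1_i$ lies in $A'_1$, and applying the same reasoning with each factor placed first gives $u^s_i\in A'_s$ for all $s$ and $i$, completing the proof.

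I do not anticipate a real obstacle: the whole argument is the one small maneuver of choosing a functional $\phi$ that separates the offending vector $u^1_1$ from $A'_1$, noting that contracting $T$ with it produces exactly a linear relation among the ``tails'' $S_i$, and using that relation to absorb one summand into the others so that the length drops below $r$. (The hypothesis $n>2$ is not actually used here; it is relevant only to the uniqueness applications for which this proposition is a tool.)
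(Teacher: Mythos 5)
Your proof is correct and is in substance the same as the paper's: both kill the part of $T$ sticking out of $A_1'$ in the first slot and then invoke minimality of $r$, you via a linear functional $\phi$ annihilating $A_1'$, the paper via a choice of complement $A_1''$ and projection of each vector. Your explicit absorption step, rewriting $T=\sum_{i\ge 2}\bigl(u^1_i-\phi(u^1_i)u^1_1\bigr)\ot S_i$ so the length drops to $r-1$, is precisely the argument the paper leaves implicit when it asserts the primed tails ${u^2_j}'\otc {u^n_j}'$ must be linearly independent \lq\lq otherwise $r$ would not be minimal\rq\rq, so the two proofs differ only in presentation (and your observation that $n>2$ is not needed here is accurate).
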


\begin{proof} Choose complements $A_t''$ so $A_t=A_t'\op A_t''$.
Assume $\rho=r$ and write $u^t_j={u^t_j}'+{u^t_j}''$ with ${u^t_j}'\in A_t'$, ${u^t_j}''\in A_t''$.
Then  
$T=\sum_{i=1}^{\rho}{u^1_i}'\otc {u^n_i}'$ so all the other terms must cancel.
Assume $\rho=r$, and say, e.g., some ${u^1_{j_0}}''\neq 0$.
Then
$\sum_{j=1}^r {u^1_j}''\ot ({u^2_j}'\otc {u^n_j}')=0$, but all the terms
$({u^2_j}'\otc {u^n_j}')$ must be linearly independent
in $A_2'\otc A_n'$ otherwise $r$ would not be minimal, thus all
the ${u^1_j}''$ must all be zero, a contradiction.
\end{proof}

\begin{definition} 
Let $\cS=\{x_1\hd x_p\}\subset \BP W$ be a set of points.
We say the points of $\cS$ are in $2$-general linear position
if no two points coincide, they are in $3$-general linear position
if no three lie on a line and more generally they are
in {\it $r$-general linear position} \index{general linear position}
if no $r-1$ of them lie in a $\pp{r-2}$.
We let the {\it Kruskal rank}\index{Kruskal rank} of $\cS$, 
$k_{\cS}$,  be the maximum number $r$ such that the points
of $\cS$ are in $r$-general linear position.
\end{definition}

If one chooses a basis for $W$ so that the points of $\cS$ can
be written as columns of a matrix (well defined up to rescaling 
columns), then $k_{\cS}$ will be the maximum number $r$ such that
all subsets of $r$ column vectors of the corresponding matrix
are linearly independent. (This was Kruskal's original definition.)

\begin{theorem}[Kruskal,\cite{MR0444690}]
Let $T\in A\ot B\ot C$. Say $T$ admits an expression
$T=\sum_{i=1}^ru_i\ot v_i\ot w_i$. Let
$\cS_A=\{ [u_i]\}$,$\cS_B=\{ [v_i]\}$,$\cS_C=\{ [w_i]\}$.
If
\be\label{kruskalbnd}
r\leq \frac 12(k_{\cS_A}+ k_{\cS_B}+k_{\cS_C})-1
\ene
then $T$ has rank $r$ and its expression as a rank $r$ tensor is
essentially unique.
\end{theorem}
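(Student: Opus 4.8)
The plan is to derive the theorem from Proposition~\ref{zerowash} together with one ``slice'' computation and a Kruskal‑type permutation lemma; in spirit this generalizes the simultaneous diagonalization of two generic $A\ot B$‑slices of $T$ available when the $u_i$, $v_i$, $w_i$ happen to be bases, with \eqref{kruskalbnd} supplying the non‑degeneracy needed in general. First I would normalize. Replace $A,B,C$ by $A'=\langle u_1\hd u_r\rangle$, $B'=\langle v_1\hd v_r\rangle$, $C'=\langle w_1\hd w_r\rangle$. Since $T\in A'\ot B'\ot C'$, Proposition~\ref{zerowash} shows every expression of $T$ with at most $r$ summands uses only vectors of $A',B',C'$, so we may assume $A=A'$, $B=B'$, $C=C'$, whence $\tdim A,\tdim B,\tdim C\le r$. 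From \eqref{kruskalbnd} and $k_{\cS_A},k_{\cS_B},k_{\cS_C}\le r$ one gets $k_{\cS_A},k_{\cS_B},k_{\cS_C}\ge 2$, so each of $\cS_A,\cS_B,\cS_C$ consists of $r$ distinct points. It then suffices to prove the single assertion: any expression $T=\sum_{j=1}^{s}x_j\ot y_j\ot z_j$ with $s\le r$ coincides, after reindexing and rescaling the three families of factors, with $\sum_{i=1}^{r}u_i\ot v_i\ot w_i$. This forces $s=r$, hence $\trank T=r$, and it is exactly essential uniqueness.

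Second, the slice lemma. Fix such a competing expression; by the normalization $x_j\in A$, $y_j\in B$, $z_j\in C$. For $\phi\in C^*$ set $T_\phi:=(\mathrm{id}_A\ot\mathrm{id}_B\ot\phi)(T)\in A\ot B=\mathrm{Hom}(B^*,A)$, and let $I_\phi=\{\,i:\langle\phi,w_i\rangle=0\,\}$, the indices of points of $\cS_C$ on the hyperplane $\{\phi=0\}\subset\BP C$. If that hyperplane contains at least $r-\min(k_{\cS_A},k_{\cS_B})$ of the points of $\cS_C$, i.e.\ $r-|I_\phi|\le\min(k_{\cS_A},k_{\cS_B})$, then the $u_i$, resp.\ $v_i$, with $i\notin I_\phi$ are linearly independent, and a direct computation gives $\trank T_\phi=r-|I_\phi|$ with image $\langle u_i:i\notin I_\phi\rangle$ in $A$ and, transposing, $\langle v_i:i\notin I_\phi\rangle$ in $B$. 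Comparing with $T_\phi=\sum_j\langle\phi,z_j\rangle\,x_j\ot y_j$ then yields: at least $r-|I_\phi|$ of the $z_j$ lie off $\{\phi=0\}$, and $\langle u_i:i\notin I_\phi\rangle\subseteq\langle x_j:\langle\phi,z_j\rangle\ne0\rangle$, $\langle v_i:i\notin I_\phi\rangle\subseteq\langle y_j:\langle\phi,z_j\rangle\ne0\rangle$. The same construction with $A^*$ or $B^*$ in place of $C^*$ gives the two cyclic analogues.

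Third -- the heart -- the combinatorics. Rewrite \eqref{kruskalbnd} as $(r-k_{\cS_A})+(r-k_{\cS_B})+(r-k_{\cS_C})\le r-2$. For any index set $I\subseteq\{1\hd r\}$ with $\max(r-k_{\cS_A},\,r-k_{\cS_B})\le|I|\le k_{\cS_C}-1$ -- a non‑empty range, by that inequality -- any $k_{\cS_C}-1$ of the $w_i$ are linearly independent and no other $w_i$ lies in the span of $\{w_i:i\in I\}$, so $\BP C$ has a hyperplane $H$ meeting $\cS_C$ in precisely $\{[w_i]:i\in I\}$; the slice lemma applied to $H$ says $H$ meets $\cS_C':=\{[z_1]\hd[z_s]\}$ in at most $|I|$ points and that $\langle u_i:i\notin I\rangle$, $\langle v_i:i\notin I\rangle$ lie in the corresponding spans of $x_j$'s and $y_j$'s. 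These relations, together with their cyclic counterparts for $\cS_A$ and $\cS_B$, are then fed into a Kruskal‑style permutation argument producing a single bijection $\pi\colon\{1\hd s\}\to\{1\hd r\}$ (in particular $s=r$) with $[x_j]=[u_{\pi(j)}]$, $[y_j]=[v_{\pi(j)}]$, $[z_j]=[w_{\pi(j)}]$. I expect this to be the main obstacle: each mode yields only a \emph{one‑sided} comparison of the two configurations, and the content is that the full inequality \eqref{kruskalbnd} -- not merely the pairwise bounds $k_{\cS_A}+k_{\cS_B}\ge r+2$ and its permutations -- forces the three comparisons to reinforce one another and to be realized by one common permutation.

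Finally, with $\pi$ as above write $x_j=a_ju_{\pi(j)}$, $y_j=b_jv_{\pi(j)}$, $z_j=c_jw_{\pi(j)}$ with $a_j,b_j,c_j\in\CC^{\times}$. Equating the two expressions gives $\sum_{i=1}^{r}(\gamma_i-1)\,u_i\ot v_i\ot w_i=0$, where $\gamma_i=a_{\pi\inv(i)}b_{\pi\inv(i)}c_{\pi\inv(i)}$. If some $\gamma_i\ne1$, then solving this relation for one summand $u_{i_0}\ot v_{i_0}\ot w_{i_0}$ and substituting into $\sum_i u_i\ot v_i\ot w_i$ would exhibit $T$ with only $r-1$ summands, contradicting $\trank T=r$ just established. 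Hence all $\gamma_i=1$, so the competing expression is, term by term after reindexing by $\pi$, the given one -- the asserted essential uniqueness.
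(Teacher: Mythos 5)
Your outline reproduces the easy parts of the argument (the normalization via Proposition \ref{zerowash}, the Sylvester-type slice computation giving one-sided counting and span comparisons, and the final rescaling step), but the step you yourself label ``the heart'' is exactly where the proof of Kruskal's theorem lives, and you have not supplied it: you say the slice relations ``are then fed into a Kruskal-style permutation argument producing a single bijection $\pi$'' and acknowledge this as the main obstacle. That is not a proof sketch of the hard step; it is a restatement of the theorem's conclusion. In the paper this step is carried by two genuinely nontrivial arguments. First, the Permutation Lemma, proved by a downward induction from hyperplanes to lower-dimensional planes (counting how points distribute over the pencil of $(k+1)$-planes through a fixed $k$-plane), converts hyperplane-by-hyperplane inequalities $\#(\cS_C\cap H)\geq \#(\tilde\cS_C\cap H)$ into the set equality $\cS_C=\tilde\cS_C$. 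Note also a mismatch in your setup: your slice lemma yields comparisons only for hyperplanes adapted to the \emph{known} configuration $\cS_C$ (spanned by chosen $w_i$'s), whereas the lemma one needs compares the configurations on hyperplanes containing many points of the \emph{competing} configuration $\tilde\cS_C$; the paper's slice estimate is arranged precisely to produce the latter, so you would have to formulate and prove your own combinatorial lemma matching the data you actually have.

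Second, even once $\cS_A=\tilde\cS_A$, $\cS_B=\tilde\cS_B$, $\cS_C=\tilde\cS_C$ as sets, you only get three a priori unrelated bijections, one per factor; your claim of a \emph{single} $\pi$ with $[x_j]=[u_{\pi(j)}]$, $[y_j]=[v_{\pi(j)}]$, $[z_j]=[w_{\pi(j)}]$ is a separate theorem. The paper proves it by assuming two of the permutations differ, constructing disjoint index sets $S,T$ and hyperplanes $H_S\subset B$, $H_T\subset C$ with controlled cardinalities $\#(S)\leq r-k_{\cS_B}+1$, $\#(T)\leq r-k_{\cS_C}+1$, and then using $T|_{H_S\upperp\times H_T\upperp}=0$ to force a linear relation among fewer than $k_{\cS_A}$ of the $u_j$, a contradiction; this is where the full inequality \eqref{kruskalbnd}, and not just its pairwise consequences, is consumed. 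Until you supply proofs of both of these steps (or precise substitutes compatible with your slice data), the proposal has a genuine gap at its central point; the surrounding reductions and the final argument that all scalars $\gamma_i=1$ are fine.
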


Above, we saw a necessary condition for uniqueness is
that $k_{\cS_A}, k_{\cS_B},k_{\cS_C}\geq 2$ and it is an easy exercise to show  that if \eqref{kruskalbnd} holds, then
$k_{\cS_A}, k_{\cS_B},k_{\cS_C}\geq 2$. (Hint: {\it a priori} $k_{\cS_A}\leq r$.) 

Note that if $\aaa=\bbb=\ccc$ and $T: (A\ot B)^*\ra C$ and similar permutations
are surjective,  then it is very easy to
see such an expression is unique when $r=\aaa$. Kruskal's Theorem
extends the uniqueness to
  $\aaa\leq r\leq \frac 32\aaa-1$.

The key to the proof of Kruskal's theorem is the following lemma:

\begin{lemma}[Permutation lemma]
Let $W$ be a complex vector space and let $\cS=\{ p_1\hd p_r\}$, $\tilde \cS=\{ q_1\hd q_r\}$ be
sets of points in $\BP W$ and assume no two points
of $\cS$ coincide (i.e., that $k_{\cS}\geq 2$) and that $\langle \tilde \cS\rangle=W$.
If   all hyperplanes $H\subset \BP W$ that have the property
that they contain at least  $\tdim ( H)+1$ points of $\tilde \cS$ also have
the property that
$\# ( \cS\cap H)\geq \# ( \tilde \cS\cap H)$, then $\cS=\tilde \cS$.
\end{lemma}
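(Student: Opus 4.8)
The plan is to argue by induction, removing one point at a time; the ``size'' that decreases is $r$, and one should keep in mind $r\ge\dim W$ since $\tilde\cS$ spans. First note it is enough to prove the inclusion $\cS\subseteq\tilde\cS$: the points of $\cS$ are distinct so $\#\cS=r$, while $\#\tilde\cS\le r$, and the inclusion then forces equality. More precisely, I will show that when $r>\dim W$ there is a point $q$ lying in both $\cS$ and $\tilde\cS$ which is redundant for $\tilde\cS$, meaning $\langle\tilde\cS\setminus\{q\}\rangle=W$; granting this, delete $q$ from both sets. Any hyperplane $H$ distinguished for $\tilde\cS\setminus\{q\}$ (i.e.\ containing $\ge\dim H+1$ of its points) is a fortiori distinguished for $\tilde\cS$, and since $q\in\cS\cap\tilde\cS$ both sides of $\#(\cS\cap H)\ge\#(\tilde\cS\cap H)$ drop by $1$ when $q\in H$ and are unchanged otherwise; so the hypotheses survive and induction gives $\cS\setminus\{q\}=\tilde\cS\setminus\{q\}$, hence $\cS=\tilde\cS$. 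For the base case $r=\dim W=:n$ the set $\tilde\cS$ is a basis, the only distinguished hyperplanes are the $n$ coordinate hyperplanes $H_i=\langle\tilde\cS\setminus\{q_i\}\rangle$ (each containing exactly $n-1$ points of $\tilde\cS$), so each $H_i$ contains $\ge n-1$ of the $n$ points of $\cS$; as $\bigcap_i H_i=\emptyset$, the incidence count $n(n-1)\le\sum_i\#(\cS\cap H_i)=\sum_{p\in\cS}\#\{i:p\in H_i\}\le n(n-1)$ is an equality, which forces each point of $\cS$ onto a vertex of the coordinate simplex, i.e.\ onto a point of $\tilde\cS$.

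To produce the redundant common point $q$ when $r>n$, suppose first that $\tilde\cS$ has a \emph{coloop} --- a point belonging to no spanning subset of the others. Let $K$ be the set of coloops, $t:=\#K$ (so $t\le n-1$), and $L:=\langle\tilde\cS\setminus K\rangle$, a subspace of dimension $n-t$ with $\tilde\cS\cap L=\tilde\cS\setminus K$ and $L=\bigcap_{c\in K}H_c$, $H_c:=\langle\tilde\cS\setminus\{c\}\rangle$. Each $H_c$ is distinguished and hence misses at most one point of $\cS$; intersecting over $c\in K$ one gets $\#(\cS\cap L)\ge r-t=\#(\tilde\cS\cap L)$, with equality once one knows $\langle\cS\rangle=W$. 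A direct computation --- extend a hyperplane $H'$ of $L$ to the hyperplane $\langle H',K\rangle$ of $W$, which is distinguished for $\tilde\cS$, and check that the extra incidences it records all lie outside $L$ and are therefore bounded by $\#(\cS\setminus L)\le t$ --- shows the hypotheses of the lemma hold for the pair $\cS\cap L$, $\tilde\cS\cap L$ inside $\BP L$, which has dimension $n-t-1<n-1$. Induction gives $\cS\cap L=\tilde\cS\cap L=\tilde\cS\setminus K$, and since $t\le n-1<r$ this set is nonempty and consists of non-coloops, so any of its points is the desired $q$.

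This reduces everything to the case where $\tilde\cS$ has no coloop --- i.e.\ every point of $\tilde\cS$ is redundant --- in which it suffices to show merely that $\cS\cap\tilde\cS\ne\emptyset$. This, together with the fact $\langle\cS\rangle=W$ used above, is the main obstacle, and is where one must exploit the hypothesis for all distinguished hyperplanes at once rather than one at a time. My plan for it is an incidence count over the pencils of hyperplanes through the codimension-two flats $\langle q_{i_1}\hd q_{i_{n-2}}\rangle$ spanned by $n-2$ points of $\tilde\cS$: each such pencil contains only finitely many distinguished hyperplanes, namely $\langle q_{i_1}\hd q_{i_{n-2}},q\rangle$ with $q\in\tilde\cS$ off the flat, and comparing $\sum_H\#(\cS\cap H)$ with $\sum_H\#(\tilde\cS\cap H)$ over those --- while letting the flat vary --- should corner a point of $\cS$ into $\tilde\cS$. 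Making this bookkeeping actually close, and likewise pinning down $\langle\cS\rangle=W$, is the part I expect to be delicate; the inductive scaffolding above is routine.
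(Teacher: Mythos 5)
Your proposal has a genuine gap, and it sits exactly at the heart of the lemma. The entire scaffolding (peel off a common redundant point and induct on $r$; handle coloops by cutting down to the subspace $L=\langle\tilde\cS\setminus K\rangle$) is conditional on two facts you never establish: that in the no-coloop case $\cS\cap\tilde\cS\neq\emptyset$, and that $\langle\cS\rangle=W$ (you need the latter even inside the coloop case, to force $\#(\cS\cap L)=r-t$ so that the two sets you feed to the inner induction have equal cardinality). For these you offer only a plan --- an incidence count over pencils of hyperplanes through codimension-two flats spanned by points of $\tilde\cS$ --- and you say yourself that making the bookkeeping close is ``the part I expect to be delicate.'' But those two facts are not side conditions; they are essentially the whole content of the lemma (note that $\langle\cS\rangle=W$ has no visibly easier proof than $\cS=\tilde\cS$ itself, of which it is a consequence). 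So what you have is a correct base case ($r=\dim W$, which is fine), a correct but conditional reduction, and no proof of the main step.

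The fix is that your pencil idea should not be run as a one-shot count at codimension two, but as a downward induction on the dimension of \emph{all} linear subspaces, which is what the paper does and which makes your point-removal and coloop apparatus unnecessary. Precisely: say a $k$-plane $L$ is distinguished if it contains at least $k+1$ points of $\tilde\cS$; the hypothesis is the inequality $\#(\cS\cap M)\geq\#(\tilde\cS\cap M)$ for distinguished hyperplanes, and one proves it for distinguished $k$-planes by descending induction on $k$. Given a distinguished $k$-plane $L$ with $\mu\geq k+1$ points of $\tilde\cS$, consider the $(k+1)$-planes $M_\a\supset L$ containing at least $\mu+1$ points of $\tilde\cS$: every point of $\tilde\cS\setminus L$ lies in exactly one $M_\a$ (and $\#\{M_\a\}\geq 2$ because $\langle\tilde\cS\rangle=W$), while every point of $\cS\setminus L$ lies in at most one, and each $M_\a$ is distinguished, so comparing the two counts of $r$ transfers the inequality from the $M_\a$ down to $L$. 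At $k=0$ the inequality says every point of $\tilde\cS$ lies in $\cS$, and distinctness of $\cS$ finishes. This is the closing mechanism your sketch is missing; without it (or an equivalent argument) the proposal does not prove the lemma.
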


If one chooses a basis for $W=\BC^n$ and writes the two sets of points
as matrices $M,\tilde M$,  then the hypothesis can be rephrased (in fact this was
the original phrasing) as to say that for all $x\in \BC^n$ such that the
number of nonzero elements of the vector
${}^t\tilde Mx$ is less than $r-{\rm{rank}}(\tilde M)+1$ also has
the property that 
 the
number of nonzero elements of the vector
${}^t\tilde Mx$ is at most
 the
number of nonzero elements of the vector
${}^t  Mx$.  To see the correspondence, the vector $x$ should be
thought of as point of $W^*$ giving an equation of $H$, zero elements
of the vector ${}^t\tilde Mx$ correspond to columns that pair with $x$
to be zero, i.e., that satisfy an equation of $H$, i.e., points that are contained
in $H$.

Note a slight discrepancy with the original formulation: we have assumed
$\langle \tilde \cS\rangle =W$ so ${\rm{rank}}(\tilde M)=n$. Our hypothesis is
slightly different, but it is all that is needed by Proposition \ref{zerowash}. Had we not
assumed this, there would be trivial cases to eliminate at each step of our
proof.

\begin{proof}
First note that if one replaces \lq\lq hyperplane\rq\rq\  by \lq\lq point\rq\rq\ in the hypotheses
of the lemma, then it  follows immediately as the points of $\cS$ are distinct.  
The proof will proceed by induction going from hyperplanes to points.
Assume   $(k+1)$-planes $M$ that have the property that they contain at least $k+2$ points of $\tilde \cS$
also have the property that $\# ( \cS\cap M)\geq \# ( \tilde \cS\cap M)$ and we will show the
same holds for    $k$-planes.
Fix a $k$-plane $L$ containing $\mu\geq k+1$ points of $\tilde \cS$, and let
$\{ M_{\a}\}$ denote the set of $k+1$ planes containing $L$ and at least $\mu+1$
elements of $\tilde \cS$.
We have
\begin{align*}
&\#(\tilde\cS\cap L)+\sum_{\a}\#(\tilde \cS\cap (M_{\a}\backslash L))=R\\
&\#( \cS\cap L)+\sum_{\a}\#(  \cS\cap (M_{\a}\backslash L))\leq R
\end{align*}
the first line because every point of $\tilde\cS$ not in $L$ is in exactly
one  $M_{\a}$ and the second because every point of $ \cS$ not in $L$ is in at most
one  $M_{\a}$.
Rewrite these as
\begin{align*}
&(\# M_{\a}-1)\#(\tilde\cS\cap L)-\sum_{\a}\#(\tilde \cS\cap M_{\a} )=-R\\
&(\# M_{\a}-1)\#( \cS\cap L)-\sum_{\a}\#(  \cS\cap  M_{\a} )\geq -R
\end{align*}
But by our induction hypothesis $\sum_{\a}\#(  \cS\cap  M_{\a} )\geq 
\#(\tilde \cS\cap M_{\a} )$ so putting
the two lines  together, we obtain the   result for $L$.
\end{proof}

\begin{proof}[Proof of theorem]
Given decompositions $\phi=\sum_{j=1}^ru_j\ot v_j\ot w_j,\tilde\phi=\sum_{j=1}^r\tilde u_j\ot \tilde v_j\ot \tilde w_j$ of length $r$ we want to show they
are essentially the same. (Note that if there were a decomposition $\tilde\phi$ of
length e.g., $r-1$, we could construct from it a decomposition of length $r$
by replacing $\tilde u_1\ot \tilde v_1\ot \tilde w_1$ by
$\frac 12\tilde u_1\ot \tilde v_1\ot \tilde w_1+\frac 12\tilde u_1\ot \tilde v_1\ot \tilde w_1$, so
uniqueness of the length $r$ decomposition implies the rank is $r$.)
We first show $\cS_A=\tilde\cS_A$,$\cS_B=\tilde\cS_B$,$\cS_C=\tilde\cS_C$.
By symmetry it  is sufficient to prove the last statement. 
By the permutation lemma it is sufficient to show
that    if $H\subset \BP C$ is a hyperplane
such that
$\#(\tilde\cS_C\cap H)\geq \ccc -1$ then
$\#( \cS_C\cap H)\geq \#(\tilde\cS_C\cap H)$ because  
we already know $k_{\cS_C}\geq 2$.

Recall the classical fact about matrices (due to Sylvester): if $M\in A\ot B$ and
$U\subset A$, $V\subset B$, then
$$
\trank(M)\geq \trank(M|_{U\upperp\times B^*}) +
\trank(M|_{A^*\times V\upperp} )  -\trank(M|_{U\upperp\times V\upperp}).
$$

Let $A_{H}:=\langle u_j\mid [w_j]\not\in H\rangle$,  
$B_{H}:=\langle v_j\mid [w_j]\not\in H\rangle$
\begin{align*}
\#(\tilde \cS_c\not\subset H)&\geq
\trank (T(H\upperp))
\\
&\geq \trank(T(H\upperp)|_{A_H\upperp\times B^*}) + 
  \trank(T(H\upperp)|_{A^*\times B_H\upperp})-\trank(T(H\upperp)|_{A_H\upperp\times B_H\upperp})
\\
&\geq   \tmin(k_A, \#( \cS_C\not\subset H)) + \tmin(k_B, \#( \cS_C\not\subset H)) -
 \#( \cS_C\not\subset H)
\end{align*}
where the last line follows by the definition of Kruskal rank.
Finally we need to show that 
$\#( \cS_C\not\subset H)\leq \tmin(k_A,k_B)$. But this follows because  
$$
r-\#( \cS_C\not\subset H)=\#( \cS_C \subset H) \geq \ccc-1\geq k_C-1\geq 2r-k_A-k_B+1
$$
i.e., $k_A+k_B-\#(\cS_C\not\subset H)\geq r+1$,
which  can only hold if 
$\#( \cS_C\not\subset H)\leq \tmin(k_A,k_B)$.
 
\medskip

Now that we have $\cS_A=\tilde \cS_A$ etc.. , say we have two expressions
\begin{align*}
T&=u_1\ot v_1\ot w_1+\cdots + u_r\ot v_r\ot w_r\\
T&=u_1\ot v_{\s(1)}\ot w_{\t(1)}+\cdots + u_r\ot v_{\s(r)}\ot w_{\t(r)}
\end{align*}
for some $\s,\t\in \FS_r$.
First observe that if $\s=\t$ then we are reduced to the two factor case which is easy, i.e., 
if $T\in A\ot B$ of rank $r$ has expressions $T=a_1\ot b_1+\cdots + a_r\ot b_r$ and
$T=a_1\ot b_{\s(1)}+\cdots + a_r\ot b_{\s(r)}$, then it is easy to see that $\s=Id$. 

So assume $\s\neq \t$, then there exists a smallest $j_0\in \{ 1\hd r\}$ such that
$\s(j_0)=:s_0\neq t_0:=\t(j_0)$.
We   claim there exist subsets $S,T\subset \{1\hd r\}$ with the properties
\begin{itemize}
\item
$s_0\in S$, $t_0\in T$, 
\item $S\cap T=\emptyset$, 
\item $\#(S)\leq r-k_{\cS_B}+1$, $\#(T)\leq r-k_{\cS_C}+1$  and
\item $\langle v_j\mid j\in S^c\rangle=: H_S\subset B$,
$\langle w_j\mid j\in T^c\rangle=: H_T\subset C$ are hyperplanes.
\end{itemize}
Here $S^c=\{1\hd r\}\backslash S$.

To prove the claim take a hyperplane $H_T\subset C$ containing $w_{s_0}$ but not
containing $w_{t_0}$, and let $T^v$ be the set of indices of the $w_j$ contained in $H_T$, so
in particular $\#(T^c)\geq k_{\cS_C}-1$ insuring the cardinality bound for $T$.
Now consider the linear space $\langle v_t\mid t\in T\rangle \subset B$. 
Since $\#(T)\leq r-k_{\cS_C}+1\leq k_{\cS_B}-1$ (the last inequality because $k_{\cS_A}\leq r$),
adding any vector of $\cS_B$ to $\langle v_t\mid t\in T\rangle$ would increase
its dimension, in particular, $v_{s_0}\notin 
\langle v_t\mid t\in T\rangle$.
Thus there exists a hyperplane $H_S\subset B$  containing $\langle v_t\mid t\in T\rangle$
and not containing $v_{s_0}$. Let $S$
be the set of indices of the $v_j$ contained in $H_S$.
Then $S,T$ have the desired properties. 

Now by construction $T|_{H_S\upperp \times H_T\upperp}=0$, which implies there is
a nontrivial linear relation among the $u_j$ for the $j$ appearing in $S \cap T $, but
this number is at most
$\tmin (r-k_{\cS_B}+1,r-k_{\cS_C}+1)$ which is   less than  $k_{\cS_A}$.  
\end{proof}

\begin{remark} There were several inequalities used in the proof that were far from sharp. In fact,
Kruskal proves versions of his theorem with weaker hypotheses designed to be more efficient
regarding the use of the inequalities.
\end{remark}

\begin{remark} The proof above is essentially Kruskal's. The reduction from a 16 page proof to
the 2 page proof above is mostly due to writing statements invariantly rather than in coordinates.
\end{remark}

More generally, Kruskal shows that for $d$ factors, if $\sum_{i=1}^d\cS_{k_i}\geq 2r+d-1$ then   uniqueness holds.

\subsection*{Acknowledgments}This short note is an outgrowth of the AIM workshop 
{\it Geometry and representation theory of tensors for computer science, statistics and other areas}   July 21-25, 2008,
and the author  gratefully thanks AIM and the other participants of the workshop, in particular
L. De Lathauwer and P. Comon who encouraged the writeup. It will appear   in the forthcoming book {\it Geometry of Tensors: Applications to complexity, statistics
and engineering} with J. Morton.

\bibliographystyle{amsplain}
\bibliography{Lmatrix}

\def\cdprime{$''$} \def\Dbar{\leavevmode\lower.6ex\hbox to 0pt{\hskip-.23ex
  \accent"16\hss}D} \def\cprime{$'$} \def\cprime{$'$} \def\cprime{$'$}
  \def\cprime{$'$} \def\Dbar{\leavevmode\lower.6ex\hbox to 0pt{\hskip-.23ex
  \accent"16\hss}D} \def\cprime{$'$}
\providecommand{\bysame}{\leavevmode\hbox to3em{\hrulefill}\thinspace}
\providecommand{\MR}{\relax\ifhmode\unskip\space\fi MR }
\providecommand{\MRhref}[2]{%
  \href{http://www.ams.org/mathscinet-getitem?mr=#1}{#2}
}
\providecommand{\href}[2]{#2}
\begin{thebibliography}{1}

\bibitem{MR0444690}
Joseph~B. Kruskal, \emph{Three-way arrays: rank and uniqueness of trilinear
  decompositions, with application to arithmetic complexity and statistics},
  Linear Algebra and Appl. \textbf{18} (1977), no.~2, 95--138. \MR{MR0444690
  (56 \#3040)}

\end{thebibliography}
\end{document}